\renewcommand\eqref[1]{(\ref{#1})} 
\title[Geometric Hardy inequalities]{Geometric Hardy inequalities on  starshaped sets}
\author[Michael Ruzhansky]{Michael Ruzhansky}
\address{\href{www.ruzhansky.org}{Michael Ruzhansky:}
	\endgraf
	Department of Mathematics: Analysis, Logic and Discrete Mathematics
	\endgraf
	Ghent University, Belgium
	\endgraf
	and
	\endgraf
	School of Mathematical Sciences
		\endgraf Queen Mary University of London 
			\endgraf
		United Kingdom
			\endgraf
	{\it E-mail address} {\rm Michael.Ruzhansky@ugent.be}
}
\author[Bolys Sabitbek]{Bolys Sabitbek}
\address{ \href{https://www.researchgate.net/profile/Bolys_Sabitbek3}{Bolys Sabitbek:}
	\endgraf
	Institute of Mathematics and Mathematical Modeling 
	\endgraf
	125 Pushkin Street., Almaty, 050010
	\endgraf
	Kazakhstan
	\endgraf
	and
	\endgraf 
	Department of Mechanics and Mathematics
	\endgraf 
	Al-Farabi Kazakh National University 
	\endgraf
	71 al-Farabi Ave., Almaty, 050040 
	\endgraf Kazakhstan
	\endgraf
	{\it E-mail address} {\rm b.sabitbek@math.kz}
}
\author[Durvudkhan Suragan]{Durvudkhan Suragan}
\address{\href{https://sst.nu.edu.kz/en/durvudkhan-suragan-phd/}{Durvudkhan Suragan:}
	\endgraf
	Department of Mathematics
	\endgraf
	Nazarbayev University
	\endgraf
	53 Kabanbay batyr Ave., Astana, 010000
	\endgraf
	Kazakhstan
	\endgraf
	{\it E-mail address} {\rm durvudkhan.suragan@nu.edu.kz}
}
\subjclass{35A23, 35H20, 35R03.}
\keywords{starshaped set; geometric Hardy inequality; Carnot group; sub-Riemannian manifold.}
\thanks{The first
	author was supported by the EPSRC Grant 
	EP/R003025/1, by the Leverhulme Research Grant RPG-2017-151, and by the FWO Odysseus grant. The second author was supported by Nazarbayev University Faculty Development Competitive Research Grants N090118FD5342. The third author was supported in parts by the MESRK grant AP05130981. No new data was collected or generated during the course of this research.}
\newtheoremstyle{theorem}
{10pt}          
{10pt}  
{\sl}  
{\parindent}     
{\bf}  
{. }    
{ }    
{}     
\theoremstyle{theorem}
\newtheorem{theorem}{Theorem}
\newtheorem{corollary}[theorem]{Corollary}
\numberwithin{equation}{section}
\theoremstyle{plain}
\newtheorem{thm}{Theorem}[section]
\theoremstyle{definition}
\newtheorem{defn}[thm]{Definition}
\newtheorem{rem}[thm]{Remark}
\newtheoremstyle{defi}
{10pt}          
{10pt}  
{\rm}  
{\parindent}     
{\bf}  
{. }    
{ }    
{}     
\theoremstyle{defi}
\newtheorem{remark}[theorem]{Remark}
\begin{document}
		\begin{abstract}
		In this paper, we present the geometric Hardy inequalities on the starshaped sets in the Carnot groups. Also, we obtain the geometric Hardy inequalities on half-spaces for general vector fields.  
	\end{abstract}
	\maketitle
\section{Introduction}\label{sec1}
In 1998, Danielli and Garofalo \cite{DG98} firstly introduced the concept of starshapedness on the Carnot groups (see also \cite{DG}). Their paper provides the geometrical properties of starshaped and convex sets. The convexity in the Heisenberg groups was studied by many authors such as Monti and Rickly \cite{MR} who proved the geodesic convexity, or by Danielli, Garofalo, and Nhieu \cite{DGN} (see also \cite{Garofalo}) who introduced the concept of horizontal convexity ($H$-convexity). Bardi and Dragoni \cite{BD3}, \cite{BD4} generalised the concept of convexity to general vector fields and introduced the notion of $\mathcal{X}$-convexity which is a generalisation of $H$-convexity. This analysis allows introducing the distance to the boundary notation for starshaped sets, so by using the distance formula one can obtain geometric Hardy type inequalities.

{\bf Acknowledgment:} We thank Nicola Garofalo for bringing to our attention the paper \cite{DGS} and also for kindly sharing with us the proper definition of starshapedness. 

\smallskip
The main aim of this paper is to obtain the geometric Hardy inequalities on starshaped sets in the Carnot groups. Moreover, we present the geometric Hardy inequalities on the half-spaces for general vector fields.

We organise the paper in the following way: 
\begin{itemize}
	\item[Sec. \ref{sec1}:] We give a brief overview of the sub-Riemannian manifolds, Grushin plane, Carnot groups, Heisenberg groups, and Engel groups.
		\item[Sec. \ref{sec4}:]  We obtain the geometric Hardy inequalities on the starshaped sets in the Carnot groups and provide some examples.
	\item[Sec. \ref{sec3}:] We obtain the geometric Hardy inequalities on the half-spaces for general vector fields and provide some examples.

	\item[Sec. \ref{sec5}:] We give the proofs of main results.
\end{itemize}
  
\subsection{Sub-Riemannian manifolds}
	Let $M$ be a smooth manifold of dimension $n$ with a family of vector fields $\{X_k\}_{k=1}^N$, $n\geq N$, defined on $M$ satisfying the H\"ormander rank condition. Then they induce a sub-Riemannian metric $\langle \cdot, \cdot\rangle_{\mathcal{H}}$ on the associated space $\mathcal{H}_x= {\rm span}(X_1(x),\ldots,X_N(x))$.  The triple $(M,\mathcal{H},\langle \cdot, \cdot\rangle_{\mathcal{H}})$ is a so-called sub-Riemannian manifold (with sub-Riemannian geometry). Note that, unlike for Carnot groups, in general, it is not possible to define dilations, translations, the homogeneous norm and the distance on sub-Riemannian manifolds. 
	
	Let us denote the operator of the sum of squares of vector fields by 
	\begin{equation}
		\mathcal{L}:= \sum_{k=1}^{N} X_k^2.
	\end{equation}
These operators have been studied by many authors, for instance, it is well-known since H\"ormander's pioneering work \cite{Hormander67} that if the commutators of the vector fields $\{X_k\}_{k=1}^N$ generate the Lie algebra, the operator $\mathcal{L}$ is locally hypoelliptic. The
$p$-version of the sum of squares of vector fields can be given by the formula
\begin{equation}
	\mathcal{L}_p f:= \nabla_{X} \cdot (|\nabla_{X} f|^{p-2}\nabla_{X} f),
\end{equation}
where 
\begin{equation*}
	\nabla_{X}:= (X_1,\ldots,X_N).
\end{equation*}
\subsection{Grushin plane}
One of the important examples of a sub-Riemannian manifold is the Grushin plane. The Grushin plane is the space $\mathbb{R}^2$ with vector fields 
\begin{equation*}
	X_1 = \frac{\partial}{\partial x_1}, \,\, \text{and} \,\, X_2 = x_1\frac{\partial}{\partial x_2},
\end{equation*}
for $x:=(x_1,x_2)\in \mathbb{R}^2$.
\subsection{Carnot groups}
Let $\mathbb{G}=(\mathbb{R}^n,\circ,\delta_{\lambda})$ be a stratified Lie group (or a homogeneous Carnot group or just a Carnot group), with the dilation structure $\delta_{\lambda}$ and Jacobian generators $X_{1},\ldots,X_{N}$, so that $N$ is the dimension of the first stratum of $\mathbb{G}$. Let us denote by $Q$ the homogeneous dimension of $\mathbb{G}$.  We refer to the recent books \cite{FR} and \cite{RS_book} for extensive discussions of stratified Lie groups and their properties.

The sub-Laplacian on $\mathbb{G}$ is given by
\begin{equation}\label{sublap}
\mathcal{L}=\sum_{k=1}^{N}X_{k}^{2}.
\end{equation}
We also recall that the standard Lebesgue measure $dx$ on $\mathbb R^{n}$ is the Haar measure for $\mathbb{G}$ (see, e.g. \cite[Proposition 1.6.6]{FR}).
Each left invariant vector field $X_{k}$ has an explicit form and satisfies the divergence theorem,
see e.g. \cite{FR} for the derivation of exact formula: more precisely, we can express
\begin{equation}\label{Xk0}
X_{k}=\frac{\partial}{\partial x'_{k}}+
\sum_{l=2}^{r}\sum_{m=1}^{N_{l}}a_{k,m}^{(l)}(x',...,x^{(l-1)})
\frac{\partial}{\partial x_{m}^{(l)}},
\end{equation}
with $x=(x',x^{(2)},\ldots,x^{(r)})$, where $r$ is the step of $\mathbb{G}$ and
$x^{(l)}=(x^{(l)}_1,\ldots,x^{(l)}_{N_l})$ are the variables in the $l^{th}$ stratum,
see also \cite[Section 3.1.5]{FR} for a general presentation.
The horizontal divergence is defined by
$${\rm div}_{H} f:=\nabla_{H}\cdot f,$$
where 
$$\nabla_{H}:=(X_{1},\ldots, X_{N})$$
is the horizontal gradient.
The $p$-sub-Laplacian has the form
\begin{equation}\label{Lp}
\mathcal{L}_p f = \nabla_{H} \cdot (|\nabla_{H} f|^{p-2}\nabla_{H} f).
\end{equation}

\subsection{Heisenberg groups}

Let $\mathbb{H}_1$ be the Heisenberg group, that is, the set $\mathbb{R}^{3}$ equipped with the group law 
\begin{equation*}
x \circ x' := (x_1 + x'_1, x_2 + x'_2, x_3 + x_3'+2(x_1' x_2 - x_1 x_2')),
\end{equation*}
where $x:= (x_1,x_2,x_3) \in \mathbb{R}^3$, and $x^{-1}=-x$ is the inverse element of $x$ with respect to the group law. The dilation operation on the Heisenberg group with respect to the group law has the form
\begin{equation*}
\delta_{\lambda}(x) := (\lambda x_1, \lambda x_2, \lambda^2 x_3) \,\, \text{for}\,\, \lambda>0.
\end{equation*} 
The Lie algebra $\mathfrak{h}$ of the left-invariant vector fields on the Heisenberg group $\mathbb{H}_1$ is spanned by 
\begin{equation*}
X_1:= \frac{\partial }{\partial x_1} + 2x_2\frac{\partial }{\partial x_3},
\end{equation*}
\begin{equation*}
X_2:= \frac{\partial }{\partial x_2} - 2x_1\frac{\partial }{\partial x_3},
\end{equation*}
with their (non-zero) commutator
\begin{equation*}
[X_1,X_2]= - 4 \frac{\partial}{\partial x_3}.
\end{equation*} 
The horizontal gradient on $\mathbb{H}_1$ is given by 
\begin{equation*}
\nabla_{H}:= (X_1,X_2),
\end{equation*} 
so the sub-Laplacian on $\mathbb{H}_1$ is given by
\begin{equation*}
\mathcal{L}:=X_1^2 + X_2^2. 
\end{equation*} 
The Heisenberg group is the most common example of a step 2 stratified group (Carnot group).
\subsection{Engel groups}
Let $\mathbb{E}$ be the Engel group, that is, the set $\mathbb{R}^{4}$ equipped with the group law 
\begin{equation*}
x \circ x' := (x_1 + x'_1, x_2 + x'_2, x_3+x_3' + P_3, x_4+x'_4 +P_4),
\end{equation*}
where 
\begin{align*}
P_3 &= \frac{1}{2}(x_1x_2'-x_2x_1'),\\
P_4&= \frac{1}{2}(x_1x_3'-x_3x_1')+\frac{1}{12}(x_1^2x_2'-x_1x_1'(x_2+x_2')+x_2x_1'^2).
\end{align*}
Here $x:=(x_1,x_2,x_3,x_4)\in \mathbb{R}^4$. The vector fields have the following form
\begin{align*}
X_1:&= \frac{\partial}{\partial x_1} - \frac{x_2}{2}\frac{\partial}{\partial x_3}- \left(\frac{x_3}{2}+ \frac{x_1x_2}{12}\right)\frac{\partial}{\partial x_4},\\
X_2:&= \frac{\partial}{\partial x_2} + \frac{x_1}{2}\frac{\partial}{\partial x_3}+\frac{x_1^2}{12}\frac{\partial}{\partial x_4},\\
X_3:&=[X_1,X_2]= \frac{\partial}{\partial x_3} + \frac{x_1}{2} \frac{\partial}{\partial x_4}, \\
X_4:&=[X_1,X_3]=\frac{\partial }{\partial x_4}.	
\end{align*}
The Engel group is a well-known example of a step 3 stratified group (Carnot group).

\section{Hardy inequalities on starshaped sets}\label{sec4}

In order to present the results on the starshaped domains, let us recall the definition of starshaped sets in a Carnot group $\mathbb{G}=(\mathbb{R}^n,\circ,\delta_{t})$ and related arguments.
\begin{defn}[Starshapedness \cite{DG98}]
	Let $\Omega \subset \mathbb{G}$ be a $C^1$ domain containing the identity $e$. Then $\Omega$ is starshaped with respect to $e$ if for every $x \in \partial \Omega$ one has 
	\begin{equation}
	\langle Z(x), n(x) \rangle \geq 0,
	\end{equation}
	where $n$ is the Riemannian outer normal to $\partial \Omega$. 
	
	When the strict inequality holds, then $\Omega$ is said to be strictly starshaped with respect to $e$.
\end{defn}
Here the vector fields $Z$ are the infinitesimal generator of this group automorphism. This vector fields $Z$ takes the form
\begin{equation}\label{Z_vector}
Z = \sum_{i=1}^{N} x'_i \frac{\partial}{\partial x'_i} + 2 \sum_{l=1}^{N_2} x_{2,l} \frac{\partial}{\partial x_{2,l}} + \cdots + r \sum_{l=1}^{N_r} x_{r,l}  \frac{\partial}{\partial x_{r,l}}.  
\end{equation}   
Then for $x' \in \mathbb{R}^N$ and $x^{(i)}\in \mathbb{R}^{N_i}$ with $i=2,\ldots,r$ we have 
\begin{equation}
Z(x) = (x', 2x^{(2)}, \cdots, rx^{(r)}),
\end{equation}
and 
\begin{align*}
\langle Z(x), n(x)\rangle =& x'n' + 2 x^{(2)} n^{(2)} + \ldots + r x^{(r)}n^{(r)}\\
=& x'_1n'_1 + \cdots+x'_Nn'_N+ 2(x_{2,1}n_{2,1}+\cdots+x_{2,N_2}n_{2,N_2})\\
&+ \cdots + r(x_{r,1}n_{r,1}+\cdots+x_{r,N_r}n_{r,N_r}),
\end{align*}
since $n(x):=(n',n^{(2)},\ldots,n^{(r)})$ with $n'\in \mathbb{R}^N$ and $n^{(i)} \in \mathbb{R}^{N_i}, \,\, i=2,\ldots,r$.

Based on the above arguments now we present the geometric Hardy inequalities on the starshaped sets for the sub-Laplacians.
\begin{theorem}\label{G_starshaped}
	Let $\Omega$ be a starshaped set on a Carnot group. Then for every $\gamma \in \mathbb{R}$ and $p>1$ we have the following Hardy inequality
	\begin{align}\label{Hardy_stratified_starshaped}
	\int_{\Omega} |\nabla_H f(x)|^p dx \geq& -(p-1)(|\gamma|^{\frac{p}{p-1}}+\gamma) \int_{\Omega} \frac{|\nabla_H \langle Z(x),n(x)\rangle|^p}{|\langle Z(x),n(x)\rangle|^p}|f(x)|^p dx\\
	& + \gamma \int_{\Omega} \frac{\mathcal{L}_p(\langle Z(x),n(x)\rangle)}{|\langle Z(x), n(x)\rangle|^{p-1}} |f(x)|^p dx, \nonumber
	\end{align}
	for every function $f\in C_0^{\infty}(\Omega)$. 
\end{theorem}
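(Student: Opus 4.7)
The plan is to establish \eqref{Hardy_stratified_starshaped} by a weighted integration-by-parts identity combined with a pointwise Young inequality, using a horizontal vector field tailored to the two terms appearing on the right-hand side.

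For brevity let $d(x):=\langle Z(x),n(x)\rangle$, which by the starshapedness assumption satisfies $d\ge 0$ on $\partial\Omega$; extending $n$ smoothly off the boundary lets me regard $d$ as a scalar function on $\Omega$, and since the statement is written entirely in terms of $|d|$, $|\nabla_H d|$ and $\mathcal{L}_p d$, the sign of $d$ in the interior is immaterial. The key step is to choose the horizontal vector field
\[
V \;:=\; -\gamma \, \frac{|\nabla_H d|^{p-2}\,\nabla_H d}{d\,|d|^{p-2}}.
\]
Since $f\in C_0^\infty(\Omega)$, the divergence theorem applied to $|f|^p V$ gives
\[
-\int_\Omega |f|^p\, {\rm div}_H V\, dx \;=\; p\int_\Omega |f|^{p-2}\,f\,(V\cdot\nabla_H f)\,dx.
\]

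Next I would compute the two scalar quantities $-{\rm div}_H V$ and $|V|^{p/(p-1)}$ by direct differentiation. Using the Leibniz rule together with the definition \eqref{Lp} of $\mathcal{L}_p$ one gets
\[
-{\rm div}_H V \;=\; \gamma\,\frac{\mathcal{L}_p d}{|d|^{p-1}} \;-\; \gamma(p-1)\,\frac{|\nabla_H d|^p}{|d|^p},
\qquad
|V|^{\frac{p}{p-1}} \;=\; |\gamma|^{\frac{p}{p-1}}\,\frac{|\nabla_H d|^p}{|d|^p}.
\]
Both of the building blocks of the right-hand side of \eqref{Hardy_stratified_starshaped} are now visible; only the prefactor of $|\nabla_H d|^p/|d|^p$ still needs to be assembled. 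For the cross term I would apply the sharp Young inequality $p\,a\,b\le a^p+(p-1)b^{p/(p-1)}$ with $a=|\nabla_H f|$ and $b=|f|^{p-1}|V|$, which yields
\[
p\int_\Omega |f|^{p-2}\,f\,(V\cdot\nabla_H f)\,dx \;\le\; \int_\Omega|\nabla_H f|^p\,dx \;+\; (p-1)\int_\Omega |f|^p|V|^{\frac{p}{p-1}}\,dx.
\]
Substituting the two explicit expressions above and collecting coefficients of $|\nabla_H d|^p/|d|^p$ produces exactly the prefactor $-(p-1)(|\gamma|^{p/(p-1)}+\gamma)$, which is \eqref{Hardy_stratified_starshaped}.

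The main obstacle I anticipate is not the algebra but a rigorous handling of the singular sets $\{d=0\}$ and $\{\nabla_H d=0\}$, where $V$ blows up or degenerates. I would deal with this by regularising $|d|$ to $(d^2+\varepsilon)^{1/2}$, carrying out the argument for the smooth approximation, and then letting $\varepsilon\to 0$ by monotone or dominated convergence; a small bookkeeping check is also needed to confirm that the freedom in extending $n$ off $\partial\Omega$ drops out once the final inequality is rewritten in the intrinsic form of \eqref{Hardy_stratified_starshaped}.
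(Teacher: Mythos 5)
Your proposal is correct and is essentially the paper's own proof: the paper takes the vector field $g=\gamma\,|\nabla_H \langle Z,n\rangle|^{p-2}\nabla_H \langle Z,n\rangle/|\langle Z,n\rangle|^{p-1}$ (your $V$ up to sign), integrates $\mathrm{div}_H g\,|f|^p$ by parts, and applies H\"older followed by Young to arrive at \eqref{Hardy_stratified_starshaped}. The only cosmetic differences are that you apply Young pointwise rather than to the H\"older-paired norms, and that you are more careful than the paper about the singular set where $\langle Z(x),n(x)\rangle$ vanishes.
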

\begin{corollary}\label{cor234}
	Let $\mathbb{H}^*$ be a starshaped set on the Heisenberg group $\mathbb{H}_1$. Then for $p>1$, we have the following Hardy inequality
	\begin{equation}\label{eqd}
	\int_{\mathbb{H}^*}  |\nabla_H f(x)|^p dx \geq \left( \frac{p-1}{p}\right)^p \int_{\mathbb{H}^*} \frac{ |(n_1+4x_2n_3,n_2-4x_1n_3)|^p}{|x_1n_1+x_2n_2+2x_3n_3|^p}|f(x)|^pdx,
	\end{equation}
	for every function $f\in C_0^{\infty}(\mathbb{H}^*)$.
\end{corollary}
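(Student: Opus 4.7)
The approach is to specialize Theorem \ref{G_starshaped} to the Heisenberg group $\mathbb{H}_1$ and then to optimize over the free parameter $\gamma$. Since $\mathbb{H}_1$ is a step-$2$ stratified group with first stratum dimension $N=2$ and second stratum dimension $N_2=1$, formula \eqref{Z_vector} reduces to $Z(x)=(x_1,x_2,2x_3)$, giving $\langle Z(x),n(x)\rangle = x_1 n_1 + x_2 n_2 + 2x_3 n_3$, where $n=(n_1,n_2,n_3)$ denotes the (extended) outer normal. Using the explicit expressions $X_1=\partial_{x_1}+2x_2\partial_{x_3}$ and $X_2=\partial_{x_2}-2x_1\partial_{x_3}$ and treating $n$ as frozen along the horizontal directions, a direct calculation yields $\nabla_H \langle Z,n\rangle = (n_1+4x_2 n_3,\, n_2-4x_1 n_3)$, which matches the numerator in \eqref{eqd}.

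The key computation is to show that the coefficient of the second term in \eqref{Hardy_stratified_starshaped} vanishes on $\mathbb{H}_1$, namely $\mathcal{L}_p\langle Z,n\rangle \equiv 0$. Writing $\psi=\langle Z,n\rangle$, one notes that $|\nabla_H \psi|^2 = (n_1+4x_2 n_3)^2 + (n_2-4x_1 n_3)^2$ is independent of $x_3$, so the $2x_j\partial_{x_3}$ pieces of $X_1$ and $X_2$ contribute nothing when differentiating $|\nabla_H\psi|^{p-2}$. Expanding $\mathcal{L}_p\psi = X_1\bigl(|\nabla_H\psi|^{p-2}(n_1+4x_2 n_3)\bigr) + X_2\bigl(|\nabla_H\psi|^{p-2}(n_2-4x_1 n_3)\bigr)$ and using $X_1(n_1+4x_2 n_3) = X_2(n_2-4x_1 n_3) = 0$, the two surviving contributions take the form $\pm 4(p-2)n_3 |\nabla_H\psi|^{p-4}(n_1+4x_2 n_3)(n_2-4x_1 n_3)$ with opposite signs and cancel identically.

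With the $\mathcal{L}_p$ term eliminated, \eqref{Hardy_stratified_starshaped} reduces to a single integral inequality whose only remaining free parameter is the scalar coefficient $c(\gamma) = -(p-1)\bigl(|\gamma|^{p/(p-1)}+\gamma\bigr)$. Maximising $c(\gamma)$ over $\gamma\in\mathbb{R}$ by elementary calculus gives the critical point $\gamma_* = -\bigl(\tfrac{p-1}{p}\bigr)^{p-1}$, producing $c(\gamma_*)=\bigl(\tfrac{p-1}{p}\bigr)^p$, which is exactly the sharp constant in \eqref{eqd}. The main obstacle, and really the only substantive step, is verifying the vanishing of $\mathcal{L}_p\langle Z,n\rangle$; this is where the antisymmetric coupling of $\partial_{x_3}$ in $X_1$ and $X_2$ on $\mathbb{H}_1$ is decisive. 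Everything else is a direct specialization of Theorem \ref{G_starshaped} and a one-line optimization.
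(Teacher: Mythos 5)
Your proposal is correct and follows essentially the same route as the paper's own proof: specializing Theorem \ref{G_starshaped} to $\mathbb{H}_1$ with $Z(x)=(x_1,x_2,2x_3)$, verifying the cancellation $\mathcal{L}_p\langle Z,n\rangle=0$, and optimizing the remaining coefficient at $\gamma=-\bigl(\tfrac{p-1}{p}\bigr)^{p-1}$. Your write-up of the cancellation is in fact slightly cleaner than the paper's (which contains a typo $n_1+2x_4n_3$ in the intermediate step), but the argument is the same.
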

\begin{remark}
	Note that in the case $$\mathbb{H}^*:=\{ \langle Z(x), n(x) \rangle>0, \,\, \forall x\in \partial \mathbb{H}^*, \, Z(x):=(x_1,x_2,2x_3) \}=\{x \in \mathbb{H}_1\cong \mathbb{R}^3: x_3>0 \}$$ with $n(x):=(0,0,1)$, and $p=2$, we have the inequality
	\begin{equation*}
	\int_{\mathbb{H}^*}  |\nabla_H f(x)|^2 dx \geq  \int_{\mathbb{H}^*} \frac{|x_1|^2+|x_2|^2}{|x_3|^2}|f(x)|^2 dx.
	\end{equation*}
\end{remark}
\begin{proof}[Proof of Corollary \ref{cor234}]
	We begin the proof of Corollary \ref{cor234} by a simple computation such as
	\begin{align*}
	\langle Z(x),n(x)\rangle &= x_1n_1 + x_2n_2 + 2x_3n_3,\\
	\nabla_{H}	\langle Z(x),n(x)\rangle &= (n_1+4x_2n_3,n_2-4x_1n_3), \\
	|\nabla_{H}	\langle Z(x),n(x)\rangle|^p &= \left( (n_1+4x_2n_3)^2+ (n_2-4x_1n_3)^2 \right)^{p/2},
	\end{align*}
	and 
	\begin{align*}
	\mathcal{L}_p 	\langle Z(x),n(x)\rangle =& \nabla_{H} \cdot (|\nabla_{H}	\langle Z(x),n(x)\rangle|^{p-2}\nabla_{H}	\langle Z(x),n(x)\rangle)  \\
	=& X_1 	(|\nabla_{H}	\langle Z(x),n(x)\rangle|^{p-2}(n_1+4x_2n_3) )\\
	+& X_2	(|\nabla_{H}	\langle Z(x),n(x)\rangle|^{p-2}(n_2-4x_1n_3))	\\
	=& -4(p-2)|\nabla_{H}\langle Z(x),n(x)\rangle|^{p-4}(n_1+4x_2n_3)(n_2-4x_1n_3)n_3 \\
	& + 4(p-2)|\nabla_{H}\langle Z(x),n(x)\rangle|^{p-4} (n_2-4x_1n_3)(n_1+2x_4n_3)n_3 \\
	=& 0.	
	\end{align*}
	Plugging the above expressions into inequality \eqref{Hardy_stratified_starshaped} and maximising with respect to $\gamma$, we arrive at inequality \eqref{eqd} which proves Corollary \ref{cor234}.
\end{proof}
\begin{corollary}\label{cor_E1_starshaped}
	Let $\mathbb{E}^*$ be a starshaped set on the Engel group $\mathbb{E}$. Then for every function $f\in C_0^{\infty}(\mathbb{E}^*)$, $\gamma \in \mathbb{R}$ and $p=2$, we have 
	\begin{align}\label{E_starshaped}
	\int_{\mathbb{E}^*}  |\nabla_H f(x)|^2 dx \geq& - (|\gamma|^2+\gamma) \int_{\mathbb{E}^*} \frac{|\nabla_{H} \langle Z(x),n(x)\rangle|^2}{\langle Z(x),n(x)\rangle^2} |f(x)|^2 dx \\
	& + \frac{\gamma}{2} \int_{\mathbb{E}^*} \frac{x_2n_4}{\langle Z(x),n(x)\rangle}|f(x)|^2 dx. \nonumber
	\end{align}
\end{corollary}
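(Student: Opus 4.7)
The plan is to apply Theorem \ref{G_starshaped} with $p=2$ to the Engel group $\mathbb{E}$, which is a step-$3$ Carnot group with stratum dimensions $N=2$, $N_2=1$, $N_3=1$, so that the dilation-generating vector field from \eqref{Z_vector} specialises to $Z(x)=(x_1, x_2, 2x_3, 3x_4)$. Writing $n(x)=(n_1,n_2,n_3,n_4)$ and treating its components as constants when differentiating (as in the Heisenberg computation of Corollary \ref{cor234}), the scalar quantity to analyse is
\[
\langle Z(x), n(x)\rangle = x_1 n_1 + x_2 n_2 + 2 x_3 n_3 + 3 x_4 n_4.
\]

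The next step is to compute the horizontal gradient of $\langle Z, n\rangle$ directly from the formulas for $X_1$ and $X_2$ recorded in the Engel group subsection. A straightforward substitution yields
\[
X_1 \langle Z, n\rangle = n_1 - x_2 n_3 - \Bigl(\tfrac{3 x_3}{2} + \tfrac{x_1 x_2}{4}\Bigr) n_4, \qquad X_2 \langle Z, n\rangle = n_2 + x_1 n_3 + \tfrac{x_1^2}{4} n_4.
\]
The sum of squares of these expressions equals $|\nabla_H \langle Z, n\rangle|^2$ and plugs directly into the first integrand on the right-hand side of \eqref{Hardy_stratified_starshaped} without further simplification.

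The decisive calculation is the linear sub-Laplacian $\mathcal{L}\langle Z, n\rangle = X_1^2 \langle Z, n\rangle + X_2^2 \langle Z, n\rangle$. Applying $X_2$ once more to $X_2 \langle Z, n\rangle$ annihilates every term since $X_2 x_1 = 0$, giving $X_2^2 \langle Z, n\rangle = 0$. For $X_1^2 \langle Z, n\rangle$, the constant and $-x_2 n_3$ terms are killed by $X_1$, and only the coefficients multiplying $n_4$ contribute: the $-\tfrac{x_2}{2}\partial_3$ part of $X_1$ turns $-\tfrac{3 x_3}{2} n_4$ into $\tfrac{3 x_2 n_4}{4}$, while the $\partial_1$ part turns $-\tfrac{x_1 x_2}{4} n_4$ into $-\tfrac{x_2 n_4}{4}$, and all $\partial_4$-contributions vanish because the factors $x_1, x_2, x_3$ are independent of $x_4$. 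Combining,
\[
\mathcal{L}\langle Z(x), n(x)\rangle = \tfrac{x_2 n_4}{2}.
\]

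Finally, substituting this identity together with $|\nabla_H \langle Z, n\rangle|^2$ and $\langle Z, n\rangle$ itself into \eqref{Hardy_stratified_starshaped} with $p=2$ (noting $(p-1)|\gamma|^{p/(p-1)}=|\gamma|^2$ and $|\langle Z, n\rangle|^{p-1}=\langle Z, n\rangle$ on a strictly starshaped set) reproduces exactly \eqref{E_starshaped}; no optimisation in $\gamma$ is performed, so $\gamma$ remains a free parameter, in contrast to Corollary \ref{cor234}. The main obstacle is the bookkeeping in the $X_1^2$ step, where the $\partial_3$- and $\partial_4$-coefficients of $X_1$ interact nontrivially through $x_1 x_2$ and $x_3$; once that cancellation is carried out correctly, the remainder of the argument is routine substitution.
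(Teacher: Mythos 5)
Your proposal is correct and follows essentially the same route as the paper: specialise Theorem \ref{G_starshaped} to $p=2$ on the Engel group with $Z(x)=(x_1,x_2,2x_3,3x_4)$, compute $\nabla_H\langle Z,n\rangle$ and $\mathcal{L}\langle Z,n\rangle=\tfrac{x_2 n_4}{2}$ from the explicit vector fields, and substitute. Your intermediate expressions and the final cancellation in the $X_1^2$ term agree exactly with the paper's computation.
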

\begin{proof}[Proof of Corollary \ref{cor_E1_starshaped}] 
	We begin the proof of Corollary \ref{cor_E1_starshaped} by a simple computation such as
	\begin{align*}
	\langle Z(x),n(x)\rangle &= x_1n_1+x_2n_2+ 2x_3n_3 +3x_4n_4, \\
	\nabla_{H}	\langle Z(x),n(x)\rangle &= \left( n_1-x_2n_3 - \frac{3x_3n_4}{2}- \frac{x_1x_2n_4}{4}, n_2+x_1n_3+\frac{x_1^2n_4}{4}\right),\\
	|\nabla_{H}\langle Z(x),n(x)\rangle|^2 &= \left(n_1-x_2n_3 - \frac{3x_3n_4}{2}- \frac{x_1x_2n_4}{4}\right)^2+ \left(n_2+x_1n_3+\frac{x_1^2n_4}{4}\right)^2,
	\end{align*}
	and
	\begin{align*}
	\mathcal{L}	\langle Z(x),n(x)\rangle=& \nabla_{H} \cdot \nabla_H\langle Z(x),n(x)\rangle\\
	=& X_1\left( n_1-x_2n_3 - \frac{3x_3n_4}{2}- \frac{x_1x_2n_4}{4}\right)  + X_2\left( n_2+x_1n_3+\frac{x_1^2n_4}{4} \right) \\
	=&  \frac{x_2n_4}{2}.
	\end{align*}
	Plugging the above expressions into inequality \eqref{Hardy_stratified_starshaped} 
	\begin{align*}
	\int_{\mathbb{E}^*}  |\nabla_H f(x)|^2 dx \geq& - (|\gamma|^2+\gamma) \int_{\mathbb{E}^*} \frac{|\nabla_{H} \langle Z(x),n(x)\rangle|^2}{\langle Z(x),n(x)\rangle^2} |f(x)|^2 dx \\
	& + \frac{\gamma}{2} \int_{\mathbb{E}^*} \frac{x_2n_4}{\langle Z(x),n(x)\rangle}|f(x)|^2 dx,
	\end{align*}
	which proves Corollary \ref{cor_E1_starshaped}.
\end{proof}

\section{Hardy inequalities on half-spaces for general vector fields}\label{sec3}
Let us define the half-space of a sub-Riemannian manifold by
\begin{equation*}
\Omega^+:= \{x \in \mathbb{R}^n : \langle x , n(x)\rangle >d \},
\end{equation*}
where $n(x) \in \mathbb{R}^n$ is the Riemannian outer unit normal to $\partial \Omega^+$ and $d \in \mathbb{R}$. 
 The Euclidean distance to the boundary $\partial \Omega^+$ is denoted by $dist(x,\partial \Omega^+)$ and defined by
\begin{equation*}
dist(x,\partial \Omega^+) := \langle x, n(x) \rangle - d.
\end{equation*}
Then we have:
\begin{theorem}\label{M_half-space}
	Let $M$ be a sub-Riemannian manifold, let $\Omega^+ \subset M$ be a half-space and let $X_1,\ldots,X_N$ be the general vector fields. Then for every $\gamma \in \mathbb{R}$ and $p>1$ we have the following Hardy inequality
	\begin{align}\label{Hardy_half-space}
	\int_{\Omega^+} |\nabla_X f(x)|^p dx \geq& -(p-1)(|\gamma|^{\frac{p}{p-1}}+\gamma) \int_{\Omega^+} \frac{|\nabla_X dist(x,\partial \Omega^+)|^p}{dist(x,\partial \Omega^+)^p}|f(x)|^p dx\\
	& + \gamma \int_{\Omega^+} \frac{\mathcal{L}_p(dist(x,\partial \Omega^+)}{dist(x,\partial \Omega^+)^{p-1}} |f(x)|^p dx, \nonumber
	\end{align}
	for every function $f\in C_0^{\infty}(\Omega^+)$. 
\end{theorem}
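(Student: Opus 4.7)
My plan is to derive \eqref{Hardy_half-space} from a weighted divergence identity followed by a pointwise Young inequality, in direct analogy with the strategy used for Theorem~\ref{G_starshaped}; in the half-space setting the role that $\langle Z(x),n(x)\rangle$ played there is now played by the boundary distance
$$d(x):=\mathrm{dist}(x,\partial\Omega^+)=\langle x,n(x)\rangle-d,$$
which is smooth and strictly positive on $\Omega^+$ and hence an admissible weight.

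The first step is to introduce the auxiliary horizontal vector field
$$V:=\gamma\,\frac{|\nabla_{X}d|^{p-2}\,\nabla_{X}d}{d^{\,p-1}},$$
and to compute, via the quotient rule together with the definition of the horizontal $p$-operator,
$$\nabla_{X}\!\cdot V=\gamma\,\frac{\mathcal{L}_p d}{d^{\,p-1}}-\gamma(p-1)\,\frac{|\nabla_{X}d|^{p}}{d^{\,p}}.$$
Multiplying by $|f|^{p}$, integrating over $\Omega^+$ and integrating by parts against the horizontal gradient then yields
$$\int_{\Omega^+}(\nabla_{X}\!\cdot V)\,|f|^{p}\,dx=-\,p\int_{\Omega^+}|f|^{p-2}f\,\bigl(V\cdot\nabla_{X}f\bigr)\,dx.$$

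The second step estimates the right-hand side pointwise. Cauchy--Schwarz on the horizontal fibre followed by Young's inequality $ab\le\tfrac{a^{p}}{p}+\tfrac{p-1}{p}\,b^{p/(p-1)}$, applied with $a=|\nabla_{X}f|$ and $b=|\gamma|\,\frac{|\nabla_{X}d|^{p-1}|f|^{p-1}}{d^{\,p-1}}$ and then multiplied through by $p$, gives the pointwise bound
$$p|\gamma|\,\frac{|\nabla_{X}d|^{p-1}|f|^{p-1}}{d^{\,p-1}}\,|\nabla_{X}f|\;\le\;|\nabla_{X}f|^{p}+(p-1)|\gamma|^{p/(p-1)}\,\frac{|\nabla_{X}d|^{p}}{d^{\,p}}\,|f|^{p}.$$
Integrating this and combining with the divergence identity (controlling the absolute value of the cross term, which is what forces the appearance of $|\gamma|^{p/(p-1)}$ and makes the argument insensitive to the sign of $\gamma$) yields \eqref{Hardy_half-space} after rearrangement; the coefficient $|\gamma|^{p/(p-1)}+\gamma$ is obtained precisely by collecting the Young contribution with the $-\gamma(p-1)$ term coming from $\nabla_{X}\!\cdot V$.

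The only genuine obstacle is justifying the integration by parts in the general sub-Riemannian setting, since, unlike for Carnot groups, there is no automatic group structure making each $X_j$ formally skew-adjoint against Lebesgue measure. Because $f\in C_0^{\infty}(\Omega^+)$ and $d$ is bounded away from zero on $\mathrm{supp}\,f$, all integrands are smooth and compactly supported, so the step reduces to the adjoint relation tacitly used in the paper's definition of the horizontal divergence $\nabla_{X}\!\cdot$. Once that relation is granted, the remainder of the argument is a direct transcription of the Carnot-group proof of Theorem~\ref{G_starshaped}, with $d(x)$ replacing $\langle Z(x),n(x)\rangle$ throughout.
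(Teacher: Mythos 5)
Your proposal is correct and follows essentially the same route as the paper: the same auxiliary vector field $g=\gamma\,|\nabla_X d|^{p-2}\nabla_X d/d^{p-1}$, the same divergence computation, and the same H\"older/Young estimate of the cross term (applied pointwise rather than at the integral level, which is immaterial). Your closing remark about justifying the integration by parts for general vector fields is a fair point that the paper itself passes over by simply invoking the divergence theorem.
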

Note that inequality \eqref{Hardy_half-space} was obtained in the Carnot groups by the authors in \cite{RSS_geo_H-S}, but here we extend it to general sub-Riemannian manifolds.

Let us give examples for the Heisenberg group (step 2), the Engel group (step 3), and the Grushin plane which does not have a group structure, but serves as an important example of the sub-Riemannian geometry. 

\begin{corollary}\label{G_half-space}
	Let $\Omega^+$ be a half-space in the Grushin plane $G$. Then for every function $f\in C_0^{\infty}(\Omega^+)$ and $p>1$, we have the following Hardy inequality

	\begin{align}\label{Hardy_G_half-space}
\int_{\Omega^+} |\nabla_{X} f(x)|^p dx \geq& -(p-1)(|\gamma|^{\frac{p}{p-1}}+\gamma) \int_{\Omega^+} \frac{(n_1^2+x_1^2n_2^2)^{p/2}}{(x_1n_1+x_2n_2-d)^p}|f(x)|^p dx \\
& +(p-2)\gamma\int_{\Omega^+} \frac{|\nabla_{X}	dist(x,\partial \Omega^+)|^{p-4}n_1n_2^2 x_1}{(x_1n_1+x_2n_2-d)^{p-1}} |f(x)|^p dx.  \nonumber
\end{align}
If one of the cases $n(x)=(1,0)$ or $n(x)=(0,1)$ holds, then we have
\begin{equation}\label{Hardy_with_best_const}
	\int_{\Omega^+} |\nabla_{X} f(x)|^p dx \geq \left(\frac{p-1}{p}\right)^p \int_{\Omega^+} \frac{(n_1^2+x_1^2n_2^2)^{p/2}}{(x_1n_1+x_2n_2-d)^p}|f(x)|^p dx,
\end{equation}
where $dist(x,\partial \Omega^+)=\langle x , n(x)\rangle-d$ and $d\in \mathbb{R}$.

\end{corollary}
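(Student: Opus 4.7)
The plan is to invoke Theorem \ref{M_half-space} directly with $M$ the Grushin plane and $\Omega^+$ a half-space, and then substitute the explicit form of the vector fields $X_1=\partial_{x_1}$, $X_2=x_1\partial_{x_2}$ into the quantities $\nabla_X\operatorname{dist}(x,\partial\Omega^+)$ and $\mathcal{L}_p(\operatorname{dist}(x,\partial\Omega^+))$ appearing on the right-hand side of \eqref{Hardy_half-space}. Since $\operatorname{dist}(x,\partial\Omega^+)=x_1n_1+x_2n_2-d$ is a first-order polynomial in $x$, all derivatives can be computed in closed form.

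Concretely, I would first evaluate $X_1\operatorname{dist}=n_1$ and $X_2\operatorname{dist}=x_1n_2$, giving $\nabla_X\operatorname{dist}=(n_1,x_1n_2)$ and hence $|\nabla_X\operatorname{dist}|^p=(n_1^2+x_1^2n_2^2)^{p/2}$; this produces the first term on the right-hand side of \eqref{Hardy_G_half-space}. Next I would compute $\mathcal{L}_p\operatorname{dist}=X_1\bigl(|\nabla_X\operatorname{dist}|^{p-2}n_1\bigr)+X_2\bigl(|\nabla_X\operatorname{dist}|^{p-2}x_1n_2\bigr)$. Using that $|\nabla_X\operatorname{dist}|^2=n_1^2+x_1^2n_2^2$ is independent of $x_2$, the $X_2$-term vanishes; the $X_1$-term reduces via the chain rule to $(p-2)|\nabla_X\operatorname{dist}|^{p-4}n_1 n_2^2 x_1$, since $X_1(n_1^2+x_1^2n_2^2)=2x_1n_2^2$. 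Plugging these two expressions into \eqref{Hardy_half-space} yields exactly \eqref{Hardy_G_half-space}.

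For the special cases $n(x)=(1,0)$ and $n(x)=(0,1)$, the factor $n_1n_2^2$ vanishes, so the $\mathcal{L}_p$-term in \eqref{Hardy_G_half-space} drops out and one is left with a one-parameter family of inequalities in $\gamma$. The final step is to optimize the coefficient $-(p-1)(|\gamma|^{p/(p-1)}+\gamma)$ over $\gamma\in\mathbb{R}$. Writing $\gamma=-t$ with $t>0$, standard calculus gives the maximizer $t=((p-1)/p)^{p-1}$, and the corresponding maximal value equals $((p-1)/p)^p$, producing the sharp constant in \eqref{Hardy_with_best_const}.

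There is no genuine obstacle here beyond bookkeeping; the only place where care is needed is the chain-rule computation of $\mathcal{L}_p\operatorname{dist}$, in order to make sure the prefactor $(p-2)$ and the exponent $p-4$ on $|\nabla_X\operatorname{dist}|$ come out correctly and that the apparent degeneracy at $p<4$ is absorbed into the scalar function being differentiated. Once this is done correctly, the corollary follows at once from Theorem \ref{M_half-space}.
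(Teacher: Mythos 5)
Your proposal is correct and follows essentially the same route as the paper: apply Theorem \ref{M_half-space}, compute $\nabla_X \mathrm{dist}=(n_1,x_1n_2)$ and $\mathcal{L}_p\,\mathrm{dist}=(p-2)|\nabla_X \mathrm{dist}|^{p-4}n_1n_2^2x_1$ for the Grushin vector fields, and then optimize over $\gamma$ (obtaining $\gamma=-((p-1)/p)^{p-1}$ and the constant $((p-1)/p)^p$) in the two special cases where the $\mathcal{L}_p$-term vanishes. All computations and the optimization match the paper's proof.
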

\begin{remark}
	Note that, with $\nabla_{X}$ the Grushin gradient,
	\begin{itemize}
		\item If $\Omega^+= \{x \in \mathbb{R}^2 : x_1 >d \}$ with $n(x)=(1,0)$, then we have
		\begin{equation*}
		\int_{\Omega^+} |\nabla_{X} f(x)|^p dx \geq \left(\frac{p-1}{p}\right)^p \int_{\Omega^+} \frac{|f(x)|^p}{|x_1-d|^p} dx.
		\end{equation*} 
		\item If $\Omega^+:= \{x \in \mathbb{R}^2 : x_2 >d \}$ with $n(x)=(0,1)$, then we have
		 	\begin{equation*}
		 \int_{\Omega^+} |\nabla_{X} f(x)|^p dx \geq \left(\frac{p-1}{p}\right)^p \int_{\Omega^+} \frac{|x_1|^p}{|x_2-d|^p}|f(x)|^p dx.
		 \end{equation*}
	\end{itemize}
\end{remark}
\begin{proof}[Proof of Corollary \ref{G_half-space}]
	We begin the proof of Corollary \ref{G_half-space} by a simple computation such as
	\begin{align*}
		dist(x,\partial \Omega^+) &=x_1n_1 + x_2n_2-d,\\
		\nabla_{X} 	dist(x,\partial \Omega^+)& = (n_1,x_1n_2),\\
			|\nabla_{X} 	dist(x,\partial \Omega^+)|^p &= (n_1^2 + x_1^2n_2^2)^{p/2},
	\end{align*} 
and
	\begin{align*}
		\mathcal{L}_p	dist(x,\partial \Omega^+)&= \nabla_{X} \cdot (|\nabla_{X} 	dist(x,\partial \Omega^+)|^{p-2}\nabla_{X} 	dist(x,\partial \Omega^+)) \\
		&=\frac{\partial}{\partial x_1}((n_1^2 + x_1^2n_2^2)^{\frac{p-2}{2}}n_1)+x_1\frac{\partial}{\partial x_2}((n_1^2 + x_1^2n_2^2)^{\frac{p-2}{2}}x_1n_2) \\
		& = (p-2)|\nabla_{X}dist(x,\partial \Omega^+)|^{p-4}n_1n_2^2 x_1.
	\end{align*}
	Plugging the above expressions into inequality \eqref{Hardy_half-space} we arrive at 
		\begin{align*}
	\int_{\Omega^+} |\nabla_{X} f(x)|^p dx \geq& -(p-1)(|\gamma|^{\frac{p}{p-1}}+\gamma) \int_{\Omega^+} \frac{(n_1^2+x_1^2n_2^2)^{p/2}}{(x_1n_1+x_2n_2-d)^p}|f(x)|^p dx \\
	& +(p-2)\gamma\int_{\Omega^+} \frac{|\nabla_{X}dist(x,\partial \Omega^+)|^{p-4}n_1n_2^2 x_1}{(x_1n_1+x_2n_2-d)^{p-1}} |f(x)|^p dx, 
	\end{align*}
	 which proves inequality \eqref{Hardy_G_half-space}. 
	 If one of the cases $n(x)=(1,0)$ or $n(x)=(0,1)$ holds, then the last term of the above inequality vanishes, so that we get  
	 \begin{align}\label{1234}
	 \int_{\Omega^+} |\nabla_{X} f(x)|^p dx \geq -(p-1)(|\gamma|^{\frac{p}{p-1}}+\gamma) \int_{\Omega^+} \frac{(n_1^2+x_1^2n_2^2)^{p/2}}{(x_1n_1+x_2n_2-d)^p}|f(x)|^p dx. 
	 \end{align}
	 Then, we maximise above inequality by differentiating with respect to $\gamma$, so that  we have 
	 \begin{equation*}
	 	\frac{p}{p-1}|\gamma|^{\frac{1}{p-1}}+1=0,
	 \end{equation*}
	 which leads to
	 \begin{equation*}
	 	\gamma = - \left( \frac{p-1}{p}\right)^{p-1}.
	 \end{equation*}
	 By putting the value of $\gamma$ into inequality \eqref{1234}, we obtain inequality \eqref{Hardy_with_best_const}.  
\end{proof}
\begin{corollary}\label{cor_H1}
	Let $\Omega^+$ be a half-space on the Heisenberg group. Then for every function $f\in C_0^{\infty}(\Omega^+)$ and $p>1$, we have 
	\begin{equation}\label{H_half-space}
	\int_{\Omega^+}  |\nabla_H f(x)|^p dx \geq \left( \frac{p-1}{p}\right)^p \int_{\Omega^+} \frac{ |(n_1+2x_2n_3,n_2-2x_1n_3)|^p}{dist(x,\partial \Omega^+)^p}|f(x)|^pdx,
	\end{equation}
	where $dist(x,\partial \Omega^+)=\langle x , n(x)\rangle-d$ and $d\in \mathbb{R}$.
\end{corollary}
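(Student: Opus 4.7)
The plan is to derive the corollary by specialising Theorem \ref{M_half-space} to the Heisenberg group with $\nabla_X = \nabla_H = (X_1,X_2)$, using the explicit distance function $dist(x,\partial\Omega^+) = x_1 n_1 + x_2 n_2 + x_3 n_3 - d$, and then optimising the free parameter $\gamma$. This mirrors the strategy already used for the Grushin plane in Corollary \ref{G_half-space} and for the starshaped analogue on $\mathbb{H}_1$ in Corollary \ref{cor234}.

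First I would compute $\nabla_H dist(x,\partial \Omega^+)$ directly from the definitions of $X_1$ and $X_2$. Since each $X_k$ differentiates only in the $x_k$- and $x_3$-directions and $dist(x,\partial\Omega^+)$ is linear in $x$, a short calculation gives
$$\nabla_H dist(x,\partial\Omega^+) = (n_1 + 2x_2 n_3,\; n_2 - 2x_1 n_3),$$
which raised to the $p$-th power yields exactly the numerator appearing on the right-hand side of \eqref{H_half-space}.

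The main computational point is to verify that $\mathcal{L}_p dist(x,\partial\Omega^+) \equiv 0$. Writing $A := n_1 + 2x_2 n_3$ and $B := n_2 - 2x_1 n_3$, one checks that $X_1 A = 0 = X_2 B$, while $X_1 B = -2 n_3$ and $X_2 A = 2 n_3$. Consequently $X_1(|\nabla_H dist|^{p-2} A)$ and $X_2(|\nabla_H dist|^{p-2} B)$ reduce to $\mp 2(p-2)|\nabla_H dist|^{p-4} A B n_3$ respectively, and their sum vanishes. I expect this cancellation to be the principal technical point; it is the same mechanism (essentially the skew-symmetry $X_1 B = -X_2 A$ of the Heisenberg horizontal frame) that made $\mathcal{L}_p \langle Z(x),n(x)\rangle = 0$ in Corollary \ref{cor234}.

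Once the $\mathcal{L}_p dist$ term drops out, Theorem \ref{M_half-space} reduces to
$$\int_{\Omega^+} |\nabla_H f(x)|^p dx \geq -(p-1)\left(|\gamma|^{p/(p-1)}+\gamma\right) \int_{\Omega^+} \frac{|\nabla_H dist(x,\partial\Omega^+)|^p}{dist(x,\partial\Omega^+)^p}|f(x)|^p dx.$$
Finally I would maximise the coefficient on the right by setting its $\gamma$-derivative to zero. The unique optimiser $\gamma = -((p-1)/p)^{p-1}$ produces the sharp prefactor $((p-1)/p)^p$, yielding the claimed inequality \eqref{H_half-space} and completing the argument.
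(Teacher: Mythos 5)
Your proposal is correct and follows essentially the same route as the paper: specialise Theorem \ref{M_half-space} to the Heisenberg frame, compute $\nabla_H dist=(n_1+2x_2n_3,\,n_2-2x_1n_3)$, verify the cancellation $\mathcal{L}_p dist\equiv 0$, and optimise in $\gamma$ to get $\gamma=-\left(\frac{p-1}{p}\right)^{p-1}$ and the constant $\left(\frac{p-1}{p}\right)^p$. All computations check out against the paper's proof.
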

\begin{rem}
	Note that if we choose $n(x)=(0,0,1)$, $p=2$ and $d=0$ in inequality \eqref{H_half-space}, then we get
		\begin{equation}\label{LY}
	\int_{\Omega^+}  |\nabla_H f(x)|^2 dx \geq  \int_{\Omega^+} \frac{|x_1|^2+|x_2|^2}{|x_3|^2}|f(x)|^2dx.
	\end{equation}
	The Hardy inequality of the form \eqref{LY} in the half-space on the Heisenberg group was shown by
	Luan and Young \cite{LY}. 
\end{rem}
\begin{proof}[Proof of Corollary \ref{cor_H1}] 
		We begin the proof of Corollary \ref{cor_H1} by a simple computation such as
	\begin{align*}
	dist(x,\partial \Omega^+) &= x_1n_1 + x_2n_2+ x_3n_3-d,\\
	\nabla_{X} dist(x,\partial \Omega^+) &=(n_1+2x_2n_3,n_2-2x_1n_3), \\
	|\nabla_{X} dist(x,\partial \Omega^+)|^p &=((n_1+2x_2n_3)^2+(n_2-2x_1n_3)^2)^{p/2}.
	\end{align*} 
	Then we compute 
	\begin{align*}
	\mathcal{L}_pdist(x,\partial \Omega^+)  =&\nabla_{H} \cdot (|\nabla_{H} dist(x,\partial \Omega^+)|^{p-2}\nabla_{H}dist(x,\partial \Omega^+) ) \\
	=& X_1 ((n_1+2x_2n_3)^2+(n_2-2x_1n_3)^2)^{\frac{p-2}{2}}(n_1+2x_2n_3) \\
	+& X_2 ((n_1+2x_2n_3)^2+(n_2-2x_1n_3)^2)^{\frac{p-2}{2}}(n_2-2x_1n_3)\\
	=& -2(p-2)|\nabla_{H}dist(x,\partial \Omega^+)|^{p-4}(n_1+2x_2n_3)(n_2-2x_1n_3)n_3 \\
	& + 2(p-2)|\nabla_{H}dist(x,\partial \Omega^+)|^{p-4} (n_2-2x_1n_3)(n_1+2x_2n_3)n_3 \\
	=& 0.
	\end{align*}
	Plugging the above expressions into inequality \eqref{Hardy_half-space}, we arrive at 
		\begin{equation}\label{for_proof}
	\int_{\Omega^+}  |\nabla_H f(x)|^p dx \geq -(p-1)(|\gamma|^{\frac{p}{p-1}}+\gamma) \int_{\Omega^+} \frac{ |(n_1+2x_2n_3,n_2-2x_1n_3)|^p}{(x_1n_1+x_2n_2+x_3n_3-d)^p}|f(x)|^pdx,
	\end{equation}
	 which can be maximised by differentiating with respect to $\gamma$, then we have 
	  \begin{equation*}
	 \frac{p}{p-1}|\gamma|^{\frac{1}{p-1}}+1=0,
	 \end{equation*}
	 that leads to
	 \begin{equation*}
	 \gamma = - \left( \frac{p-1}{p}\right)^{p-1}.
	 \end{equation*}
	 By putting the value of $\gamma$ into inequality \eqref{for_proof}, we obtain inequality
	 	\begin{equation*}
	 \int_{\Omega^+}  |\nabla_H f(x)|^p dx \geq \left( \frac{p-1}{p}\right)^p \int_{\Omega^+} \frac{ |(n_1+2x_2n_3,n_2-2x_1n_3)|^p}{dist(x,\partial \Omega^+)^p}|f(x)|^pdx,
	 \end{equation*}
 which proves Corollary \ref{cor_H1}.  
\end{proof}
\begin{corollary}\label{cor_E1}
	Let $\Omega^+$ be a half-space on the Engel group $\mathbb{E}$. Then for every function $f\in C_0^{\infty}(\Omega^+)$, $\gamma \in \mathbb{R}$ and $p=2$, we have 
	\begin{align}\label{E_half-space}
	\int_{\Omega^+}  |\nabla_H f(x)|^2 dx \geq& - (|\gamma|^2+\gamma) \int_{\Omega^+} \frac{|\nabla_{H} dist(x,\partial \Omega^+)|^2}{dist(x,\partial \Omega^+)^2} |f(x)|^2 dx \\
	& + \frac{\gamma}{6} \int_{\Omega^+} \frac{x_2n_4}{dist(x,\partial \Omega^+)}|f(x)|^2 dx, \nonumber
	\end{align}
	where $dist(x,\partial \Omega^+)=\langle x , n(x)\rangle-d$ and $d\in \mathbb{R}$.
\end{corollary}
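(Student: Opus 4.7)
The proof will be a direct application of Theorem \ref{M_half-space} specialised to $p=2$ and to the Engel group, in complete analogy with the arguments given for Corollary \ref{cor_H1} on the Heisenberg group and Corollary \ref{cor_E1_starshaped} in the starshaped setting. The only real content is a bookkeeping computation of $\nabla_H\, dist$ and $\mathcal{L}\, dist$ using the explicit Engel vector fields $X_1, X_2$; once these are in hand, the inequality is immediate.

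First I would write the distance as $dist(x,\partial\Omega^+) = x_1 n_1 + x_2 n_2 + x_3 n_3 + x_4 n_4 - d$ and apply $X_1, X_2$ term by term. Since $X_1$ contributes $-\frac{x_2}{2}$ in the $\partial_{x_3}$ slot and $-(\frac{x_3}{2}+\frac{x_1x_2}{12})$ in the $\partial_{x_4}$ slot, I get $X_1\,dist = n_1 - \frac{x_2}{2}n_3 - \left(\frac{x_3}{2}+\frac{x_1x_2}{12}\right)n_4$, and similarly $X_2\,dist = n_2 + \frac{x_1}{2}n_3 + \frac{x_1^2}{12}n_4$. This gives $|\nabla_H\, dist|^2$ as the sum of the squares of these two expressions, which is exactly the factor appearing in the first integral of \eqref{E_half-space}.

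The slightly more delicate step is the computation of $\mathcal{L}\, dist = X_1^2\,dist + X_2^2\,dist$ (since $p=2$, no $p$-Laplacian subtleties arise). Observing that $X_2\,dist$ depends only on the variable $x_1$, the three pieces of $X_2 = \partial_{x_2} + \frac{x_1}{2}\partial_{x_3} + \frac{x_1^2}{12}\partial_{x_4}$ all annihilate it, hence $X_2^2\,dist = 0$. For $X_1^2\,dist$ there are two nontrivial contributions: the $\partial_{x_1}$ part of $X_1$ acting on $-\frac{x_1x_2}{12}n_4$ produces $-\frac{x_2 n_4}{12}$, while the $-\frac{x_2}{2}\partial_{x_3}$ part acting on $-\frac{x_3}{2}n_4$ produces $+\frac{x_2 n_4}{4}$; the $\partial_{x_4}$ part contributes nothing because $X_1\,dist$ is independent of $x_4$. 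Adding these yields $\mathcal{L}\,dist = \frac{x_2 n_4}{6}$.

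Plugging $|\nabla_H\,dist|^2$ and $\mathcal{L}\,dist = \frac{x_2 n_4}{6}$ into Theorem \ref{M_half-space} with $p=2$, and noting $|\gamma|^{p/(p-1)} = |\gamma|^2$, yields exactly the claimed inequality \eqref{E_half-space}. I do not expect any genuine obstacle: the only point requiring care is the cancellation pattern in $X_1^2\,dist$, where the signs of the $-\frac{x_2}{2}$ coefficient of $\partial_{x_3}$ and of the $-\frac{x_3}{2}$ coefficient of $\partial_{x_4}$ in $X_1$ must be combined correctly with the $\partial_{x_1}$ derivative of the mixed term $\frac{x_1 x_2}{12}n_4$. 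Since no maximisation in $\gamma$ is performed (unlike in Corollaries \ref{G_half-space} and \ref{cor_H1}), the statement is kept in its $\gamma$-parametric form, and the proof is complete after substitution.
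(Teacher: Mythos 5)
Your proposal is correct and follows essentially the same route as the paper: compute $X_1\,dist$ and $X_2\,dist$ explicitly, verify $\mathcal{L}\,dist=\tfrac{x_2 n_4}{6}$, and substitute into Theorem \ref{M_half-space} with $p=2$ without optimising in $\gamma$. Your term-by-term accounting of the cancellation in $X_1^2\,dist$ (namely $-\tfrac{x_2 n_4}{12}+\tfrac{x_2 n_4}{4}=\tfrac{x_2 n_4}{6}$) agrees with the paper's computation.
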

\begin{proof}[Proof of Corollary \ref{cor_E1}] 
	We begin the proof of Corollary \ref{cor_E1} by a simple computation such as
	\begin{align*}
		dist(x,\partial \Omega^+) &= x_1n_1+x_2n_2+ x_3n_3 +x_4n_4-d, \\
		\nabla_{H}	dist(x,\partial \Omega^+) &= \left( n_1 - \frac{x_2n_3}{2}- \frac{x_3n_4}{2} - \frac{x_1x_2n_4}{12}, n_2+\frac{x_1n_3}{2}+\frac{x_1^2n_4}{12} \right),\\
		|\nabla_{H}dist(x,\partial \Omega^+)|^2 &= \left( n_1 - \frac{x_2n_3}{2}- \frac{x_3n_4}{2} - \frac{x_1x_2n_4}{12}\right)^2+ \left(n_2+\frac{x_1n_3}{2}+\frac{x_1^2n_4}{12}\right)^2,
	\end{align*}
	and
	\begin{align*}
		\mathcal{L}	(dist(x,\partial \Omega^+) )=& \nabla_{H} \cdot \nabla_Hdist(x,\partial \Omega^+)\\
		=& \nabla_{H}\cdot \left( n_1 - \frac{x_2n_3}{2}- \frac{x_3n_4}{2} - \frac{x_1x_2n_4}{12}, n_2+\frac{x_1n_3}{2}+\frac{x_1^2n_4}{12} \right)\\
		=& X_1\left( n_1 - \frac{x_2n_3}{2}- \frac{x_3n_4}{2} - \frac{x_1x_2n_4}{12} \right)  + X_2\left( n_2+\frac{x_1n_3}{2}+\frac{x_1^2n_4}{12} \right) \\
		=&  \frac{x_2n_4}{6}.
	\end{align*}
	Plugging the above expressions into inequality \eqref{Hardy_half-space}, we get 
	\begin{align*}
	\int_{\Omega^+}  |\nabla_H f(x)|^2 dx \geq& - (|\gamma|^2+\gamma) \int_{\Omega^+} \frac{|\nabla_{H} dist(x,\partial \Omega^+)|^2}{dist(x,\partial \Omega^+)^2} |f(x)|^2 dx \\
	& + \frac{\gamma}{6} \int_{\Omega^+} \frac{x_2n_4}{dist(x,\partial \Omega^+)}|f(x)|^2 dx,
	\end{align*}
	which proves Corollary \ref{cor_E1}.
\end{proof}

\section{Proof of Main Results}\label{sec5}
The approach to prove the main results is based on the works  \cite{RSS_geom} and \cite{RSS_geo_H-S} (see, also \cite{RSS_Lp}-\cite{RS_local}).
For a vector field $g \in C^{\infty}(\Omega) $ we compute 
\begin{align*}
\int_{\Omega} { \rm div}_{X}g  |f(x)|^p dx &= -p \int_{\Omega}|f(x)|^{p-1}  \langle g, \nabla_{X} f(x) \rangle dx \\
& \leq p \left( \int_{\Omega} |\nabla_{X} f(x)|^p dx\right)^{\frac{1}{p}} \left( \int_{\Omega} |g|^{\frac{p}{p-1}}|f(x)|^p dx \right)^{\frac{p-1}{p}}\\
& \leq \int_{\Omega} |\nabla_{H} f(x)|^p dx + (p-1)\int_{\Omega} |g|^{\frac{p}{p-1}}|f(x)|^p dx.	\end{align*}
Here we have first used the divergence theorem, then we applied the H\"older inequality and the Young inequality.
By rearranging the above expression, we arrive at
\begin{equation}\label{1}
\int_{\Omega} |\nabla_{X} f(x)|^p dx \geq \int_{\Omega} ({\rm div}_{X}g-(p-1)|g|^{\frac{p}{p-1}} )|f(x)|^p dx.
\end{equation}
A suitable choice of the vector field $g$ in each special case is a key argument of our proofs.
\begin{proof}[Proof of Theorem \ref{G_starshaped}]
	Let us set  
	\begin{equation*}\label{23}
	g = \gamma \frac{|\nabla_{H}\langle Z(x),n(x)\rangle |^{p-2}}{|\langle Z(x),n(x)\rangle|^{p-1}}\nabla_{H} \langle Z(x),n(x)\rangle,
	\end{equation*}
	so that we have
	\begin{equation}\label{g_vecotr2}
	|g|^{\frac{p}{p-1}} =|\gamma|^{\frac{p}{p-1}} \frac{|\nabla_{H} \langle Z(x),n(x)\rangle|^p}{|\langle Z(x),n(x)\rangle|^p},
	\end{equation}
	and
	\begin{align}\label{div_g2}
	{ \rm div}_{H} g = \gamma \frac{\mathcal{L}_p (\langle Z(x),n(x)\rangle)}{|\langle Z(x),n(x)\rangle|^{p-1}} - \gamma(p-1)\frac{|\nabla_{H}\langle Z(x),n(x)\rangle|^p}{|\langle Z(x),n(x)\rangle|^p}.
	\end{align}
	Plugging the above expressions \eqref{g_vecotr2} and \eqref{div_g2} into inequality \eqref{1}, we get  
	\begin{align*}
	\int_{\Omega} |\nabla_H f(x)|^p dx \geq& -(p-1)(|\gamma|^{\frac{p}{p-1}}+\gamma) \int_{\Omega} \frac{|\nabla_H \langle Z(x),n(x)\rangle|^p}{|\langle Z(x),n(x)\rangle|^p}|f(x)|^p dx\\
	& + \gamma \int_{\Omega} \frac{\mathcal{L}_p(\langle Z(x),n(x)\rangle)}{|\langle Z(x),n(x)\rangle|^{p-1}} |f(x)|^p dx,
	\end{align*}
	which proves inequality \eqref{Hardy_stratified_starshaped}.
\end{proof}
\begin{proof}[Proof of Theorem \ref{M_half-space}]
	Let us take
	\begin{equation}\label{22}
	g = \gamma \frac{|\nabla_{X}dist(x,\partial \Omega^+) |^{p-2}}{dist(x,\partial \Omega^+)^{p-1}}\nabla_{X} dist(x,\partial \Omega^+),
	\end{equation}
	so that we have
	\begin{equation}\label{g_vecotr1}
	|g|^{\frac{p}{p-1}} =|\gamma|^{\frac{p}{p-1}} \frac{|\nabla_{X} dist(x,\partial \Omega^+)|^p}{dist(x,\partial \Omega^+)^p},
	\end{equation}
	and
	\begin{align}\label{div_g1}
	{ \rm div}_{X} g = \gamma \frac{\mathcal{L}_p dist(x,\partial \Omega^+)}{dist(x,\partial \Omega^+)^{p-1}} - \gamma(p-1)\frac{|\nabla_{X}dist(x,\partial \Omega^+)|^p}{dist(x,\partial \Omega^+)^p}.
	\end{align}
Combining expressions \eqref{g_vecotr1} and \eqref{div_g1} with inequality \eqref{1}, we obtain  
	\begin{align*}
	\int_{\Omega} |\nabla_X f(x)|^p dx \geq& -(p-1)(|\gamma|^{\frac{p}{p-1}}+\gamma) \int_{\Omega} \frac{|\nabla_X dist(x,\partial \Omega^+)|^p}{dist(x,\partial \Omega^+)^p}|f(x)|^p dx\\
	& + \gamma \int_{\Omega} \frac{\mathcal{L}_pdist(x,\partial \Omega^+)}{dist(x,\partial \Omega^+)^{p-1}} |f(x)|^p dx,
	\end{align*}
	which proves inequality \eqref{Hardy_half-space}.
\end{proof}


\begin{thebibliography}{NZW01}
\bibitem{BD3}
Baldi M., Dragoni F.:
\newblock Convexity and semiconvexity along vector fields. 
\newblock \textit{Calc. Var. PDE}, 42, no. 3-4, 405-427 (2011)
\bibitem{BD4}
Baldi M., Dragoni M.:
\newblock Subdifferential and properties of convex functions with respect to vector fields. 
\newblock \textit{J. Convex Anal.} 21, no. 3, 785-810 (2014)
	
		\bibitem{DGN}
	Danielli D. Garofalo N. and Nhieu D.M.:
	\newblock Notions of convexity in Carnot groups.
	\newblock \textit{Comm. Anal. Geom.} 11(2), 263-–341 (2003)
	
	\bibitem{DG98}
	Danielli D., Garofalo N.: 
Geometric properties of solutions to subelliptic equations in nilpotent Lie groups. Reaction diffusion systems (Trieste, 1995), 89–105, Lecture Notes in Pure and Appl. Math., 194, Dekker, New York, (1998)
	\bibitem{DG}
	Danielli D. and Garofalo N.:
	 Green functions in Carnot groups and the geometry of their level sets. Personal communication.
	\bibitem{DGS}
	Dragoni F., Garofalo N., and Salani P.: 
	Starshapedeness for fully-nonlinear equations in Carnot groups. {\it J. London Math. Soc.} (2) 00, 1--18 (2018)
	\bibitem{FR}
	Fischer V., Ruzhansky M.:
	\newblock \href{https://link.springer.com/book/10.1007%2F978-3-319-29558-9}{Quantization on nilpotent Lie groups.}
	\newblock \textit{Progress in Mathematics},  314, Birkh\"auser, (open access book) (2016)




	
	\bibitem{Garofalo}
	Garofalo N.:
	\newblock Geometric second derivative estimates in Carnot groups and convexity.
	\newblock \textit{Manuscripta Math.} 126, 353-–373 (2008)
	
	\bibitem{Hormander67}
	H\"ormander L.: 
	\newblock Hypoelliptic second order differential equations.
	\newblock \textit{Acta Math.} 119(1), 147-171 (1967)
	
	
	\bibitem{MR}
	Monti R., Rickly M.:
	\newblock Geodetically convex sets in the Heisenberg group. 
	\newblock \textit{J. Convex Anal.} 12, No. 1, 187-196 (2005) 
	
	\bibitem{RSS_geom}
	Ruzhansky M., Sabitbek B., Suragan D.:
	\newblock Subelliptic geometric Hardy type inequalities on half-spaces and convex domains.
	\newblock 	\href{https://arxiv.org/abs/1806.06226}{\textit{arXiv:1806.06226}} (2018)
	
	\bibitem{RSS_geo_H-S}
	Ruzhansky M., Sabitbek B., Suragan D.: 
	\newblock Geometric Hardy and Hardy-Sobolev inequalities on Heisenberg groups
	\newblock 	\href{https://arxiv.org/abs/1811.07181}{\textit{arXiv:1811.07181}} (2018)
	
	\bibitem{RSS_Lp}
	Ruzhansky M., Sabitbek B., Suragan D.:
	\newblock Weighted $L^p$-Hardy and $L^p$-Rellich inequalities with boundary terms on stratified Lie groups.
	\newblock  \textit{Rev. Mat. Complutense} 32, 19--35  (2019)
	
	

		\bibitem{RS_local}
	Ruzhansky M., Suragan D.:
	\newblock Local Hardy and Rellich inequalities for sums of squares. 
	\newblock  \textit{ Adv. Diff. Equations} 22, 505--540  (2017)
	
		\bibitem{RS_book}
	Ruzhansky M., Suragan D.:
	\newblock \href{https://www.springer.com/la/book/9783030028947}{Hardy inequalities on homogeneous groups.}
	\newblock \textit{Progress in Mathematics,} 327, Birkh\"auser, (open access book) (2019)
	
	\bibitem{LY}
	Luan, J., Yang, Q.: 
	\newblock A Hardy type inequality in the half-space on $\mathbb{R}^n$ and Heisenberg group. 
	\newblock \textit{J. Math. Anal. Appl.} 347(2), 645-651 (2008)
\end{thebibliography}
\end{document}